\documentclass[12pt]{article}
\usepackage{amsmath}
\usepackage{amssymb}
\usepackage{bbm}
\usepackage[margin=1.25cm]{geometry}
\usepackage{hyperref}
\usepackage{tikz}
\usepackage{setspace}

\usetikzlibrary { decorations.pathmorphing, decorations.pathreplacing, decorations.shapes, }

\newtheorem{thm}{Theorem}[section]

\newtheorem{lemma}[thm]{Lemma}

\newtheorem{theorem}[thm]{Theorem}
\newtheorem{remark}[thm]{Remark}

\newtheorem{definition}[thm]{Definition}

\newtheorem{conj}[thm]{Conjecture}

\newenvironment{proof}{{\bf Proof:}}{\hfill$\square$\vskip.5cm}

\newcommand{\N}{\mathbb{N}}

\begin{document} 

\title{About the Cram\'er Large Deviation Property for Bell Polynomials}
\author{Sophia Li${}^1$ and Shannon Starr${}^2$\\
\small
${}^1$ Vestavia Hills High School, 2235 Lime Rock Rd, Vestavia Hills, AL 35216\\
\small
${}^2$ University of Alabama at Birmingham, Math.~Dept., 1402 Tenth Avenue South, Birmingham, AL 35294--1241}
\date{9 September 2026}

\maketitle

\abstract{
If $\boldsymbol{w} = (w_1,w_2,\dots)$ is a sequence in $\N=\{1,2,\dots\}$, the partial Bell polynomials based on $\boldsymbol{w}$ are
$B_{n,k}$ for $k \in \N$ and $n\in\{k,k+1,\dots\}$. 
Let $F(z) = \sum_{n=1}^{\infty} (w_n/n!)z^n$ be the exponential generating function for $\boldsymbol{w}$,
and assume the radius of convegence is positive $R>0$. 
Then $F(z)^k = \sum_{n=k}^{\infty} (k!/n!) z^n B_{n,k}$ for $|z|<R$.
Alternatively, defining $Q_{k,n} = (k!/n!)B_{n,k}$, we have $Q_{1,n} = w_n/n!$, 
and  $Q_{k+1,n}=\sum_{m=1}^{n-k} Q_{1,m} Q_{k,n-m}$ for $k\geq 1$. 
Let us say that the Cram\'er-type large deviation property holds if
$$
\lim_{\substack{n \to \infty\\ k/n \to \kappa}} \frac{1}{n}\, 
\ln\left(Q_{k,n}\right)\, =\, \mathcal{G}(\kappa)\, ,$$
for every $\kappa \in (0,1)$,
where $\mathcal{G}(\kappa)=\inf_{r \in (0,R)} (\kappa \ln(F(r))-\ln(r))$.
The (Hardy-Ramanujan) Erd\"os induction argument suggests this should generally be true
as long as two technical conditions are true: one an initial step, and the other a condition for small densities $\kappa$.
}

\section{The Bell polynomials}

Suppose that $\boldsymbol{w} = (w_1,w_2,\dots)$ is a combinatorial sequence, so that in particular  $w_n$
is in $\N = \{1,2,\dots\}$ for $n \in \N$.
Then the partial Bell polynomials are defined as 
$$
B_{n,k}\, =\, \frac{n!}{k!}\, \sum_{(\nu(1),\dots,\nu(k)) \in \N^k} 
\mathbf{1}_{\{n\}}\big(\nu(1)+\dots+\nu(k)\big) \prod_{j=1}^{k} \frac{w_{\nu(j)}}{\nu(j)!}\, ,
$$
for $k \in \N$ and $n \in \{k,k+1,\dots\}$.
They are polynomials in the components of $\boldsymbol{w}$.
But we will suppress the functional dependence on those variables for notational simplicity.

We note that the output is also a positive integer  because
$$
B_{n,k}\, =\, \sum_{\{A_1,\dots,A_k\} \in \mathcal{P}(n,k)}\, \prod_{j=1}^{k} w_{|A_j|}\, ,
$$
where $\mathcal{P}(n,k)$ is the set of set partitions of $[n] = \{1,\dots,n\}$ into $k$
nonempty parts.

An excellent reference is Chapter 1 of Pitman's monograph on Combinatorial Stochastic Processes
\cite{Pitman}.
Pitman himself suggests Comtet's monograph \cite{Comtet}, who gave many references that were already present at
the time of his publication in 1974.
\section{The Cram\'er large deviation property}

Let us denote
$$
Q_{k,n}\, =\, \frac{k!}{n!}\,  B_{n,k}\, .
$$
Let us denote the generating function of the first row
$$
F(z)\, =\, \sum_{n=1}^{\infty} \frac{w_n}{n!}\, z^n\, =\, \sum_{n=1}^{\infty}  z^n Q_{1,n}\, .
$$
Let $R$ be defined as
$$
R\, =\, \sup\left(\left\{r\geq 0\, :\, F(r)<\infty\right\}\right)\, ,
$$
the radius of convergence.
Here and throughout we will assume $R>0$.
Then for $|z|<R$,
$$
F(z)^k\, =\, \sum_{n=k}^\infty Q_{k,n} z^n\, .
$$
Hence, by a positivity similar to Chernov's bound, we know for $0<r<R$ we have
$$
Q_{k,n}\, \leq\, \frac{F(r)^k}{r^n}\, .
$$
In other words,
defining
$$
\Lambda_{k,n}(r)\, =\, k\, \ln(F(r)) - n \ln(r)\, ,
$$
we have
\begin{equation}
\label{ineq:Upper}
\ln\left(Q_{k,n}\right)\, \leq\, \inf_{r \in (0,R)} \Lambda_{k,n}(r)\, .
\end{equation}

In the simplest cases of large deviation theory, we find that the elementary upper
bound actually coincides with the true large deviation behavior.
That is what we call the Cram\'er large deviation behavior.

Let us define $\mathcal{L}(r,\kappa)$ to be
$$
\mathcal{L}(r,\kappa)\, =\, \kappa \ln(F(r)) - \ln(r)\, ,
$$
so that $\Lambda_{k,n}(r) = n \mathcal{L}(r,k/n)$.

\begin{definition}
\label{def:MainOne}
We say that the $Q_{k,n}$'s satisfy the Cram\'er large deviation property if the following holds.
For each $\kappa \in (0,1)$ and any sequence $k_n$ satisfying $\lim_{n \to \infty} k_n/n = \kappa$,
$$
\lim_{n \to \infty} \frac{1}{n}\, \ln\left(Q_{k_n,n}\right)\,
=\, \mathcal{G}(\kappa)\, ,
$$
where
$$
\mathcal{G}(\kappa)\, =\, \min_{r \in [0,R]} \mathcal{L}(r,\kappa)\, .
$$
\end{definition}
In the next section, we will show that the minimum is attained.

\section{Elementary analytic properties of $\mathcal{G}$}

We need a number of specific calculations around $\mathcal{G}$.
Firstly, for each $r \in (0,R)$, let us define the probability mass function
$\boldsymbol{\mu}(r) = (\mu_1(r),\mu_2(r),\dots)$ such that
$$
\mu_n(r)\, =\, \frac{Q_{1,n} r^n}{\sum_{m=1}^{\infty} Q_{1,m} r^m}\, =\, \frac{Q_{1,n} r^n}{F(r)}\, .
$$
Then, defining
$$
\mathcal{M}(r)\, =\, \frac{r F'(r)}{F(r)}\, ,
$$
we have
$$
\mathcal{M}(r)\, =\, \sum_{n=1}^{\infty} n \mu_n(r)\, .
$$
Obviously $\mathcal{M}(r)>0$ so that $F$ is increasing on $(0,R)$.
Next, let 
$$
\mathcal{V}(r)\, =\, r\, \frac{d}{dr}\, \mathcal{M}(r)\, ,
$$
which is 
$$
\mathcal{V}(r)\, =\, \sum_{n=1}^{\infty} n^2 \mu_n(r) - \left(\sum_{n=1}^{\infty} n \mu_n(r)\right)^2\, .
$$
This is the variance of a non-degenerate distribution, so by the usual Cauchy-Schwarz inequality
$\mathcal{V}(r)>0$.
Thus, $\mathcal{M}$ is increasing on $(0,R)$.

Note that
$$
\frac{\partial}{\partial r}\, \mathcal{L}(r,\kappa)\, =\, \kappa\, \frac{F'(r)}{F(r)} - \frac{1}{r}\, .
$$
So this can be rewritten as 
$$
\frac{\partial}{\partial r}\, \mathcal{L}(r,\kappa)\, =\, \frac{1}{r}\, \left(\kappa \mathcal{M}(r) - 1\right)\, .
$$
Since $\mathcal{M}$ is increasing we have the following. If there is some $r=\rho(\kappa)$ 
such that $\mathcal{M}(r) = 1/\kappa$, then: $\frac{\partial}{\partial r} \mathcal{L}(r,\kappa)<0$ if $r<\rho(\kappa)$
and $\frac{\partial}{\partial r} \mathcal{L}(r,\kappa)>0$ if $r>\rho(\kappa)$.
Therefore, in this case $r=\rho(\kappa)$ is the unique minimizer of $\mathcal{L}(\cdot,\kappa)$.

Since the limit of the probability mass function is 
$$
\lim_{r \to 0^+} \mu_n(r)\, =\, \delta_{n,1}\, ,
$$
we see that $\rho(1) = \lim_{\kappa \to 1^-} \rho(\kappa)=0$.

Note that $\frac{d}{dt} \ln(F(e^t)) = \mathcal{M}(e^t)$ and $\frac{d^2}{dt^2} \ln(F(e^t)) = \mathcal{V}(e^t)$.
Therefore, $t\mapsto \ln(F(e^t))$ is increasing and convex.
\begin{lemma}
If $R=\infty$ then $\lim_{r \to \infty} \mathcal{M}(r)=\infty$.
\end{lemma}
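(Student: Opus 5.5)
Since every $w_n\geq 1$, the function $F$ is not a polynomial: it has infinitely many nonzero coefficients. The plan is to use the probabilistic interpretation $\mathcal{M}(r)=\sum_n n\mu_n(r)$ and show that for every fixed $N\in\N$ the mass of $\boldsymbol{\mu}(r)$ on $\{1,\dots,N\}$ tends to $0$ as $r\to\infty$. Once that holds, the bound $\mathcal{M}(r)\geq (N+1)\sum_{n>N}\mu_n(r)$ gives $\liminf_{r\to\infty}\mathcal{M}(r)\geq N+1$ for every $N$, hence $\mathcal{M}(r)\to\infty$. We already know from the previous section that $\mathcal{M}$ is increasing on $(0,\infty)$, so the limit exists in $(0,\infty]$; it only needs to be shown that it is not finite.

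The key step is the estimate on $\sum_{n\le N}\mu_n(r)$. I would compare the numerator $\sum_{n=1}^N Q_{1,n}r^n\le C_N r^N$ (for $r\ge1$, with $C_N=\sum_{n\le N}Q_{1,n}$) against a single later term of $F$: since $Q_{1,N+1}=w_{N+1}/(N+1)!>0$, we have $F(r)\geq Q_{1,N+1}r^{N+1}$. Therefore
$$
\sum_{n=1}^{N}\mu_n(r)\,\leq\,\frac{C_N r^N}{Q_{1,N+1}r^{N+1}}\,=\,\frac{C_N}{Q_{1,N+1}\,r}\,\longrightarrow\,0\, ,
$$
as $r\to\infty$. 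Here $R=\infty$ is what allows $r\to\infty$ inside the domain where $F$ is finite.

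Alternatively, I could use the convexity remark $\frac{d}{dt}\ln F(e^t)=\mathcal{M}(e^t)$: if $\mathcal{M}\le M$ then $\ln F(e^t)\le \ln F(1)+Mt$, i.e.\ $F(r)\le F(1)r^M$ for $r\ge1$. This contradicts $F(r)\ge Q_{1,n}r^n$ for an integer $n>M$. This second route is probably the cleanest and is the one I would write. There is no real obstacle; the only point needing care is that it uses positivity of infinitely many $w_n$ (guaranteed since $w_n\in\N$). Without that, the statement would fail, for instance for polynomial $F$.
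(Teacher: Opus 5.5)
Your proposal is correct. Your first route is the paper's own argument in streamlined form. The paper likewise shows $\sum_{n\le N}\mu_n(r)\to 0$ by comparing with the single term $Q_{1,N+1}r^{N+1}\le F(r)$, but it controls the numerator through an exponential tilt, $\sum_{n\le N}\mu_n(e^T)\le e^{N(T-t)}F(e^t)/F(e^T)$ with $0<t<T$. Your estimate $r^n\le r^N$ for $r\ge 1$ gives the same $O(1/r)$ decay directly.

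The route you say you would actually write is genuinely different. Instead of tracking where the mass of $\boldsymbol{\mu}(r)$ goes, you integrate $\frac{d}{dt}\ln F(e^t)=\mathcal{M}(e^t)$ to turn a bound $\mathcal{M}\le M$ into polynomial growth $F(r)\le F(1)r^M$, which any positive coefficient of degree $n>M$ contradicts. This is shorter, but it needs the monotonicity of $\mathcal{M}$ (i.e.\ $\mathcal{V}>0$) to pass from ``$\mathcal{M}$ is unbounded'' to ``$\mathcal{M}(r)\to\infty$''. By itself, the integration only shows that the averages $\frac{1}{t}\int_0^t\mathcal{M}(e^s)\,ds$ diverge. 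The mass-escape argument instead gives $\liminf_{r\to\infty}\mathcal{M}(r)\ge N+1$ with no appeal to monotonicity. Since you explicitly invoke the monotonicity the paper already established, your second route is complete as written. Both arguments also extend at once to the remark after the lemma (the case $R<\infty$, $F(R)=\infty$).
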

\begin{proof}
By Chebyshev's inequality, for $0<t<T$ we have
$$
\sum_{n=1}^{N} \mu_n(e^T)\, \leq\, \sum_{n=1}^{N} e^{(N-n)(T-t)} \mu_{n}(e^T)\,
\leq\, e^{N(T-t)}\, \cdot 
\frac{\sum_{n=1}^{N} e^{nt} Q_{1,n}}{\sum_{n=1}^{\infty} e^{nT} Q_{1,n}}\, ,
$$
which is bounded by $\exp\left(N(T-t)+\ln(F(e^t))-\ln(F(e^T))\right)$.
But $F(e^T)>Q_{1,N+1} e^{(N+1)T}$. So $\ln(F(e^T))>(N+1)T+\ln(Q_{1,N+1})$.
Note that we have assumed that $w_{N+1}\geq 1$ so that $\ln(Q_{1,N+1})=\ln(w_{N+1}/(N+1)!)$ is not $-\infty$.
This implies
$$
\lim_{T \to \infty} \sum_{n=1}^{N} \mu_n(e^T)\, =\, 0\, .
$$
So $\lim_{r \to \infty} \mathcal{M}(r)>N+1$. But since $N$ is arbitrary, this proves the lemma.
\end{proof}
The same proof implies that if $R<\infty$ and if $F(R) = \lim_{r \to R^-}F(r)$ equals $\infty$,
then $\mathcal{M}(R)=\lim_{r \to R^-} \mathcal{M}(r)$ equals $\infty$.

In case $R<\infty$ and $F(R)<\infty$, let us define
$$
\mathcal{M}(R)\, =\, \lim_{r \to R^-} \mathcal{M}(r)\, =\, \sum_{n=1}^{\infty} n \mu_n(R)\, ,
$$
where $\mu_n(R)=Q_{1,n} R^n/F(R)$.
It is possible that $F(R)<\infty$ but $\mathcal{M}(R)=\infty$.
If $F(R)=\infty$ then we define $\mathcal{M}(R)$ to be $\infty$.

\begin{definition}
We say that the ``analytic regime'' for $\kappa \in (0,1)$ is the regime of those values satisfying
$$
\frac{1}{\kappa}\, <\, \mathcal{M}(R)\, .
$$
We say that the ``condensation regime'' for $\kappa \in (0,1)$ is the complementary regime, 
which might be empty.
\end{definition}

By the previous argument we know that if $\kappa$ is in the analytic regime then 
$\rho(\kappa) \in (0,R)$ and
$$
\mathcal{G}(\kappa)\, =\, \mathcal{L}(\rho(\kappa),\kappa)\, .
$$
But similarly, if $\kappa$ is in the condensation regime then $\rho(\kappa)=R$ and 
$$
\mathcal{G}(\kappa)\, =\, \mathcal{L}(R,\kappa)\, .
$$
Of course the condensation regime is non-empty only if $R<\infty$, $F(R)<\infty$
and $\mathcal{M}(R)<\infty$.

\begin{remark}
An elementary observation is that $\mathcal{G}$ is concave, as it is the infimum of 
affine functions (hence concave) when viewed as functions of $\kappa$.
In the condensed phase $\mathcal{G}(\kappa) = \kappa \ln(F(R)) - \ln(R)$ so it is affine.
\end{remark}

\section{The  induction argument: heuristic outline}

Let us now consider an induction argument to demonstrate why we believe the Cram\'er property,
Definition \ref{def:MainOne}, holds generally.
By (\ref{ineq:Upper}), we know
$$
\ln(Q_{k,n})\, \leq\, \inf_{r \in (0,R)} \Lambda_{k,n}(r)\, =\, n \inf_{r \in (0,R)} \mathcal{L}(r,k/n)\, ,
$$
just because $\Lambda_{k,n} = n \mathcal{L}(\cdot,k/n)$.
Hence, we already know
$$
\limsup_{n \to \infty} \frac{1}{n}\, \ln(Q_{k_n,n})\, \leq\, \mathcal{G}(\kappa)\, ,
$$
as long as $k_n$ is a sequence such that $\lim_{n \to \infty} k_n/n = \kappa$
(in part by the continuity of $\mathcal{G}$).
Now we need a matching lower bound.

In \cite{Erdos}, Paul Erd\H{o}s gave an elementary argument for the Hardy-Ramanujan
asymptotics of the partition number $p(n) = |\operatorname{Par}(n)|$
beyond the large deviation rate function
(where $\operatorname{Par}(n)$ can be thought of as 
the set of sequences $(\nu(1),\nu(2),\dots) \in \{0,1,\dots\}^{\N}$ satisfying the condition $\sum_{k=1}^{\infty} k \nu(k)=n$).
We will mimic that.
We are primarily interested in the introductory bound of that article, which is focused
on just the large deviation behavior (not the Bahadur-Rao type algebraic prefactor).
Erd\"os used the known recurrence relation for the partition numbers
$$
n p(n)\, =\, \sum_{\nu \in \N} \sum_{k \in \N} \nu p(n-k\nu)\, ,
$$
where $p(0)=1$ and $p(-m)=0$ for all $m \in \N$.
He effectively related this to the approximation of $p(n)$ at the level of large deviations $\exp(\pi \sqrt{2n/3})$, because
$$
\sum_{\nu \in \N} \sum_{k \in \N} \nu \exp\left(\pi\, \sqrt{\frac{2(n-k\nu)}{3}}\right)\,
\sim\, \exp(\pi \sqrt{2n/3}) \sum_{k\in \N} \frac{6n}{\pi^2 k}\, =\, n\exp(\pi \sqrt{2n/3})\, ,
$$
for large $n$.
Erd\"{o}s noted that Hardy and Ramanujan
themselves said that the recurrence relation should easily lead to the large deviation
rate function.

For us, the recurrence relation we will use for Bell polynomials is 
\begin{equation}
\label{eq:recurrenceHere}
Q_{k+1,n}\, =\, \sum_{m=1}^{n-k} Q_{k,n-m} Q_{1,m} \, .
\end{equation}
And we will try to relate it to the formulas we have for $r \in (0,R)$
$$
F(r)\, =\, \sum_{m=1}^{\infty} r^m Q_{1,m}\, ,
$$
for $r = \rho(\kappa)$ if we take $k_n$ to be a sequence such that 
$\lim_{n \to \infty} k_n/n = \kappa$.

\begin{definition}
For each $k \in \N$ and $n \in \{k,k+1,\dot\}$, we define the approximating sequence
$$
\Theta_{k,n}\, =\, \exp(\Lambda_{k,n}(\rho(k/n)))\, =\, \frac{F(\rho(k/n))^k}{\rho(k/n)^n}\, .
$$
\end{definition}

We wish to establish the desideratum
\begin{equation}
\liminf_{n \to \infty} \frac{1}{n}\, \ln\left(Q_{k_n,n}\right)\, \geq\, \mathcal{G}(\kappa)\, ,
\end{equation}
whenever $k_n$ is a sequence such that $\lim_{n \to \infty} k_n/n = \kappa$.
Let us define the following.
\begin{definition}
For each $\epsilon>0$, and $k \in \N$, let us define $\delta_k(\epsilon)$ such that
$$
\delta_k(\epsilon)\, =\, \inf_{n\geq k} \frac{Q_{k,n}}{\Theta_{k,n}}\, e^{\epsilon n }\, .
$$
\end{definition}
Let
$$
\delta(\epsilon)\, =\, \inf_{k \in \N} \delta_k(\epsilon)\, .
$$
If $\delta(\epsilon)>0$ then we have
$$
\liminf_{n \to \infty} \frac{1}{n}\, \ln\left(Q_{k_n,n}\right)\, \geq\, \mathcal{G}(\kappa)-\epsilon\, ,
$$
whenever $k_n$ is a sequence such that $\lim_{n \to \infty} k_n/n = \kappa$.
Therefore, one approach to proving Cram\'er's property is to establish that $\delta(\epsilon)>0$.

\section{Taylor expansion for the ratio}

We wish to show that if $\delta_k(\epsilon)>0$ then $\delta_{k+1}(\epsilon)>0$.
We also wish to show that if $\delta_k(\epsilon)>0$ for sufficiently large $k$ (depending on $\epsilon$) then $\delta_{k+1}(\epsilon)\geq \delta_k(\epsilon)$.

Let us assume that $\delta_k(\epsilon)>0$. Then
\begin{equation}
\begin{split}
\frac{Q_{k+1,n}}{\Theta_{k+1,n}}\, e^{\epsilon n}\, 
&=\, \frac{1}{\Theta_{k+1,n}}\, e^{\epsilon n }
\sum_{m=1}^{n-k} Q_{1,m} Q_{k,n-m}\\ 
&\geq\, \delta_k(\epsilon) \sum_{m=1}^{n-k} Q_{1,m} e^{\epsilon m }
\frac{\Theta_{k,n-m}}{\Theta_{k+1,n}}\, .
\end{split}
\end{equation}
Let us write
$$
\ln\left(\frac{\Theta_{k,n-m}}{\Theta_{k+1,n}}\right)\, =\, 
\Lambda_{k,n-k}\left(\rho\left(\frac{k}{n-m}\right)\right) - \Lambda_{k+1,n}\left(\rho\left(\frac{k+1}{n}\right)\right)\, .
$$
Also, note that
$$
\Lambda_{k+1,n}(r)\, =\, (k+1) \ln(F(r)) - n \ln(r)\, =\, \Lambda_{k,n-m}(r) + \ln(F(r)) -m \ln(r)\, .
$$
So we can write
\begin{equation}
\label{eq:FiniteDifference}
\frac{Q_{k+1,n}}{\Theta_{k+1,n}}\, e^{\epsilon n}\, 
\geq\, \frac{\delta_k(\epsilon)\, 
\sum_{m=1}^{n-k} Q_{1,m} \left(\rho\left(\frac{k+1}{n}\right)\right)^m e^{\epsilon m}
\exp\left(\left[ -\Lambda_{k,n-m}\circ \rho(\kappa)\right]\Bigg|_{k/(n-m)}^{(k+1)/n}\right)}{F\left(\rho\left(\frac{k+1}{n}\right)\right)}
\end{equation}
Now let us Taylor-expand the finite difference.
Let us define
$$
\Delta \kappa_{k,n,m}\, =\, \frac{k+1}{n} - \frac{k}{n-m}\, =\, \frac{n-m(k+1)}{n(n-m)}\, .
$$
Then, we note that we defined $\rho(\kappa)$ such that $\rho(k_1/n_1)$ optimizes $\Lambda_{k_1,n_1}$ for each $k_1$
and $n_1$.
So if we are in the analytic regime, then 
$$
\frac{\partial}{\partial r}\, \Lambda_{k_1,n_1}(r)\Bigg|_{r=\rho(k_1/n_1)}\, =\, 0\, .
$$
So, assuming $k/(n-m)$ is in the analytic regime, we Taylor-expand to second order.
Then we have
$$
-\Lambda_{k,n-m}\circ \rho(\kappa)\Bigg|_{k/(n-m)}^{(k+1)/n}\, \approx\, -\frac{1}{2}\, \frac{\partial^2 \Lambda_{k,n-m}(r)}{\partial r}\Bigg|_{r=\rho(k/(n-m))}
\left(\rho'\left(\frac{k}{n-m}\right)\right)^2\, \left(\Delta \kappa_{k,n,m}\right)^2\, .
$$

\section{``Uniformly Strict Cauchy-Schwarz'' property}

\begin{lemma}
We have for $k_1/n_1$ in the ``analytic regime,''
$$
\frac{\partial^2 \Lambda_{k_1,n_1}(r)}{\partial r} \Bigg|_{r=\rho(k_1/n_1)}\, =\, k_1 \left(\frac{F''(r)}{F(r)}-\left(\frac{F'(r)}{F(r)}\right)^2\right) + \frac{n_1}{r^2}\Bigg|_{r=\rho(k_1/n_1)}\,
=\, \frac{k_1 \mathcal{V}(\rho(k_1/n_1))}{\rho(k_1/n_1)^2}\, ,
$$
for
$$
\mathcal{V}(r)\, =\, r\, \frac{d}{dr} \left(\frac{r F'(r)}{F(r)}\right)\, =\, \sum_{n=1}^{\infty} \left(n - \mathcal{M}(r)\right)^2 \mu_n(r)\, .
$$
And if $\kappa \in (0,1)$ is in the analytic regime, then
$$
\frac{d\rho(\kappa)}{d\kappa}\, =\, -\frac{\rho(\kappa)}{\kappa^2 \mathcal{V}(\rho(\kappa))}\, =\, -\frac{\rho(\kappa) \mathcal{M}(\rho(\kappa))^2}{\mathcal{V}(\rho(\kappa))}\, .
$$
\end{lemma}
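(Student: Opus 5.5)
The plan is a direct computation in three parts: the second derivative of $\Lambda_{k_1,n_1}$, the identification of the variance, and implicit differentiation of $\mathcal{M}(\rho(\kappa))=1/\kappa$.

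\emph{Second derivative.} Start from $\Lambda_{k_1,n_1}(r)=k_1\ln(F(r))-n_1\ln(r)$. Differentiating twice gives
$$
\frac{\partial^2 \Lambda_{k_1,n_1}}{\partial r^2}\, =\, k_1\left(\frac{F''(r)}{F(r)}-\left(\frac{F'(r)}{F(r)}\right)^2\right)+\frac{n_1}{r^2}\, ,
$$
which is the first equality of the statement. The statement writes $\partial r$ in the denominator; I read it as the second derivative.

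\emph{Identifying $\mathcal{V}$.} Next I would express $\mathcal{V}$ through $F$. Write $\mathcal{M}(r)=rF'(r)/F(r)$. Then
$$
\mathcal{V}(r)\, =\, r\mathcal{M}'(r)\, =\, \frac{rF'}{F}+r^2\left(\frac{F''}{F}-\left(\frac{F'}{F}\right)^2\right)\, .
$$
It follows that
$$
\frac{F''}{F}-\left(\frac{F'}{F}\right)^2\, =\, \frac{\mathcal{V}(r)-\mathcal{M}(r)}{r^2}\, .
$$
Substituting, the second derivative equals $\bigl(k_1\mathcal{V}(r)-k_1\mathcal{M}(r)+n_1\bigr)/r^2$. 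At $r=\rho(k_1/n_1)$ in the analytic regime we have $\mathcal{M}(r)=n_1/k_1$, so $k_1\mathcal{M}(r)=n_1$. The last two terms cancel and what remains is $k_1\mathcal{V}(\rho)/\rho^2$.

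The variance formula $\mathcal{V}=\sum n^2\mu_n-\mathcal{M}^2$ was recorded earlier in the paper. To confirm it, use $r\frac{d}{dr}\mu_n(r)=(n-\mathcal{M}(r))\mu_n(r)$. Applying $r\frac{d}{dr}$ to $\mathcal{M}=\sum n\mu_n$ then gives $\sum n(n-\mathcal{M})\mu_n=\sum (n-\mathcal{M})^2\mu_n$. Termwise differentiation is justified for $r<R$ by absolute convergence of power series.

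\emph{Derivative of $\rho$.} On the analytic regime $\rho$ is defined implicitly by $\mathcal{M}(\rho(\kappa))=1/\kappa$. The map $\mathcal{M}$ is smooth on $(0,R)$ with $\mathcal{M}'(r)=\mathcal{V}(r)/r>0$, so by the inverse function theorem $\rho$ is differentiable there. Differentiating the defining identity gives
$$
\mathcal{M}'(\rho)\rho'(\kappa)\, =\, -\frac{1}{\kappa^2}\, ,
$$
so $\rho'(\kappa)=-\rho/(\kappa^2\mathcal{V}(\rho))$. Substituting $1/\kappa^2=\mathcal{M}(\rho)^2$ yields the second form $-\rho\mathcal{M}(\rho)^2/\mathcal{V}(\rho)$.

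\emph{Main obstacle.} There is little genuine difficulty. The step needing care is justifying that $\rho$ is differentiable, which relies on $\mathcal{V}>0$ (established earlier via strict Cauchy--Schwarz for a non-degenerate distribution) and on $\rho(\kappa)$ lying in the open interval $(0,R)$. The latter is exactly what the analytic-regime hypothesis guarantees.
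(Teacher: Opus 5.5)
Your proof is correct and is the direct computation the paper intends. The paper states this lemma without a separate proof; it relies on the earlier identity $\mathcal{V}=r\mathcal{M}'$, on the critical-point relation $k_1\mathcal{M}(\rho(k_1/n_1))=n_1$, and on implicit differentiation of $\mathcal{M}(\rho(\kappa))=1/\kappa$, which are exactly your three steps.
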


From the lemma, we conclude that, as long as $k/(n-m)$ is in the analytic regime, then
\begin{equation}
\label{eq:TaylorOne}
\begin{split}
 -\frac{1}{2}\, \frac{\partial^2 \Lambda_{k,n-m}(r)}{\partial r}\Bigg|_{r=\rho(k/(n-m))}
\left(\rho'\left(\frac{k}{n-m}\right)\right)^2\, \left(\Delta \kappa_{k,n,m}\right)^2\\
&\hspace{-3.5cm}=\, -\frac{\mathcal{M}(\rho(k/(n-m)))^2}{2 \mathcal{V}(\rho(k/(n-m)))}\,\cdot \frac{\big(n-m(k+1)\big)^2}{n^2k}\, .
\end{split}
\end{equation}
We have used $n-m = k \mathcal{M}(\rho(k/(n-m)))$.

\begin{conj}
Suppose that $\delta_1(\epsilon)>0$ for all $\epsilon>0$ and suppose that we have the condition
$$
\limsup_{r \to R^-} \frac{\mathcal{M}(r)^2}{\mathcal{V}(r)}\, <\, \infty\, .
$$
Then we have the Cram\'er  property for the Bell polynomials, as stated in Definition \ref{def:MainOne}.
\end{conj}

We call the condition
$$
\limsup_{r \to R^-} \frac{\mathcal{M}(r)^2}{\mathcal{V}(r)}\, <\, \infty\, ,
$$
the ``uniformly strict Cauchy-Schwarz'' condition, because it means
$$
\limsup_{r \to R^-}  \frac{\mathcal{M}(r)^2}{\mathcal{V}(r) +\mathcal{M}(r)^2}\, =\, 
\limsup_{r \to R^-} \frac{\left(\sum_{n=1}^{\infty} n \mu_n(r)\right)^2}{\sum_{n=1}^{\infty} n^2 \mu_n(r)}\, <\, 1\, .
$$
Of course the inequality with a non-strict inequality is trivial by Cauchy-Schwarz.
But we will need $\limsup_{r \to R^-} \mathcal{M}(r)^2/\mathcal{V}(r)<\infty$
in the rationale that we describe in the next section.
So we demand the uniformly strict Cauchy-Schwarz condition.

\section{Rationale for the conjecture: initial steps}

For the first consideration of the induction argument, let us assume that the condensation phase is empty.
We need to establish firstly that $\delta_k(\epsilon)>0$ for each $k \in \N$ and each $\epsilon>0$.
Then, after this first step, we want to establish that for each $\epsilon>0$ there is a $k_0 \in \N$ such that
$\delta_{k+1}(\epsilon) \geq \delta_k(\epsilon)$ for $k\geq k_0$.

Let us think of $k$ as fixed for the first part of the argument.
Clearly,
$$
\Big(\delta_k(\epsilon) > 0\Big)\, \Leftrightarrow\,
\Big(\liminf_{n \to \infty} 
\frac{Q_{k+1,n}}{\Theta_{k+1,n}}\, e^{\epsilon n} > 0 \Big)\, .
$$
Since
$$
-\Lambda_{k,n-m}\circ \rho(\kappa)\Bigg|_{k/(n-m)}^{(k+1)/n}\, \approx\, -\frac{1}{2}\, \frac{\partial^2 \Lambda_{k,n-m}(r)}{\partial r}\Bigg|_{r=\rho(k/(n-m))}
\left(\rho'\left(\frac{k}{n-m}\right)\right)^2\, \left(\Delta \kappa_{k,n,m}\right)^2\, ,
$$
using (\ref{eq:TaylorOne})
we can conclude that
$$
-\Lambda_{k,n-m}\circ \rho(\kappa)\Bigg|_{k/(n-m)}^{(k+1)/n}\, \geq\, - C_k\, ,
$$
as $n \to \infty$ for some constant $C_k$ (that depends on $\limsup_{r \to R^-} \mathcal{M}(r)^2/\mathcal{V}(r)<\infty$).
So then, using (\ref{eq:FiniteDifference}),
we find
$$
\frac{Q_{k+1,n}}{\Theta_{k,n+1}}\, e^{\epsilon n}\, 
\geq\, \frac{\delta_k(\epsilon)\, 
\sum_{m=1}^{n-k} Q_{1,m} \left(\rho\left(\frac{k+1}{n}\right)\right)^m e^{\epsilon m} e^{-C_k}}
{F\left(\rho\left(\frac{k+1}{n}\right)\right)}\, ,
$$
for $n$ sufficiently large.
But then we can write this as 
$$
\frac{Q_{k+1,n}}{\Theta_{k,n+1}}\, e^{\epsilon n}\, 
\geq\, \delta_k(\epsilon) e^{-C_k}\,  
\sum_{m=1}^{n-k} \mu_n\left(\rho\left(\frac{k+1}{n}\right)\right)\, .
$$
By Markov's inequality we can conclude the following.
\begin{lemma}
\label{lem:Markov}
For each $r \in (0,R)$, we have
$$
\sum_{m=m_1}^{\infty} \mu_n(r)\, \leq\, \frac{\mathcal{M}(r)}{m_1}\, ,
$$
for each $m_1 \in \N$ such that $m_1 \geq \mathcal{M}(r)$.
\end{lemma}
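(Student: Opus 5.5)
The statement is a direct application of Markov's inequality to the probability mass function $\boldsymbol{\mu}(r)$. (The summand is written $\mu_n(r)$, but the summation index is $m$; we read it as $\mu_m(r)$.) The plan is to interpret $\boldsymbol{\mu}(r)$ as the law of an $\N$-valued random variable $X_r$ with $\mathbb{P}(X_r=m)=\mu_m(r)$. Its mean $\mathbb{E}[X_r]=\sum_{m} m\mu_m(r)=\mathcal{M}(r)$ is finite for $r\in(0,R)$, since $F'(r)<\infty$ inside the radius of convergence. The tail sum in the lemma is then $\mathbb{P}(X_r\geq m_1)$.

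The key step is the pointwise inequality $\mathbf{1}_{\{m\geq m_1\}}\leq m/m_1$, valid for all $m\in\N$ because $m/m_1\geq 1$ on the event and $m/m_1\geq 0$ always. Multiplying by $\mu_m(r)\geq 0$ and summing over $m$ gives
$$
\sum_{m=m_1}^{\infty}\mu_m(r)\,\leq\,\sum_{m=m_1}^\infty \frac{m}{m_1}\,\mu_m(r)\,\leq\,\frac{1}{m_1}\sum_{m=1}^{\infty} m\,\mu_m(r)\,=\,\frac{\mathcal{M}(r)}{m_1}\,.
$$
All terms are nonnegative, so rearranging and enlarging the range of summation are justified.

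The hypothesis $m_1\geq\mathcal{M}(r)$ is not needed for the inequality itself. It only ensures that the bound is at most $1$, and hence nontrivial. I would remark on this so that the lemma can later be applied with $r=\rho((k+1)/n)$ and $m_1=n-k+1$, to bound the complementary tail of $\sum_{m=1}^{n-k}\mu_m\bigl(\rho(\tfrac{k+1}{n})\bigr)$ from below by $1-\mathcal{M}/(n-k+1)$.

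There is no real obstacle here. The only point requiring care is confirming that $\mathcal{M}(r)$ is finite for $r<R$, which follows from termwise differentiation of the power series $F$ strictly inside its radius of convergence.
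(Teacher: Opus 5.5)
Your proof is correct and follows the paper's route: the paper just cites Markov's inequality for the law $\boldsymbol{\mu}(r)$ with mean $\mathcal{M}(r)$, and you carry this out explicitly via $\mathbf{1}_{\{m\geq m_1\}}\leq m/m_1$. Your remarks are also accurate: the summand should read $\mu_m(r)$, and the hypothesis $m_1\geq\mathcal{M}(r)$ is not needed for the inequality and only makes the bound nontrivial.
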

Hence, we have
$$
\frac{Q_{k+1,n}}{\Theta_{k,n+1}}\, e^{\epsilon n}\, 
\geq\, \delta_k(\epsilon) e^{-C_k}\,  \left(1 - \frac{n}{(k+1)(n-k)}\right)\, ,
$$
for $n$ sufficiently large.
But for $k$ fixed, if we take $n \to \infty$ this is at least $\delta_k(\epsilon) e^{-C_k}k/(k+1)>0$.

Of course, to make this rigorous, we would need to keep track of the remainder term in the Taylor expansion.
We do not wish to consider details at that level, here in this note.

\section{Rationale for the conjecture: induction step}

The second part of the argument is similar, but more complicated.
One may think of the induction argument as being somewhat akin to a differential
equation, with the finite difference of the induction step replacing the time evolution that is normally accomplished by
the derivative.
In this case we also have an underlying variable $\kappa \in (0,1)$.
So one can think of the recurrence relation
$$
Q_{k+1,n}\, =\, \sum_{m=1}^{n-k} Q_{1,m} Q_{k,n-m}\, ,
$$
as an integro-differential equation.
For the benefit of having a simple analogy, just imagine a partial differential equation.

We have two boundary points $\kappa=0$ and $\kappa=1$.
The technical difficulties at these two points are different.
Let us continue to assume that the condensation phase is empty.
\begin{figure}
\begin{center}
\begin{tikzpicture}
\draw[very thick] (0,0) -- (10,0);
\fill (0,0) circle (2mm);
\fill (10,0) circle (2mm);
\draw (0,1) node[] {$\kappa=0$};
\draw (0,-1) node[] {$r\to R^-$};
\draw (10,1) node[] {$\kappa=1$};
\draw (10,-1) node[] {$r\to 0^+$};
\end{tikzpicture}
\end{center}
\caption{At the two boundary points there are different technical issues.
At $\kappa=0$, the issue is that we need the uniformly strict Cauchy-Schwarz condition.
At $\kappa=1$, the issue is that $n/(k+1)=1$ so $n-k=1$. This truncates the power series summation at one term.
But $\rho((k+1)/n)=0$, so $\boldsymbol{\mu}(0) = (1,0,0,\dots)$.
\label{fig:1}
}
\end{figure}
The basic idea of the induction argument is that for generic points, we can choose $k$ sufficiently large that
$$
e^{\epsilon m}
\exp\left(\left[ -\Lambda_{k,n-m}\circ \rho(\kappa)\right]\Bigg|_{k/(n-m)}^{(k+1)/n}\right)\, 
\geq\, 1\, .
$$
Even though the Taylor expansion of the difference in the exponent is negative, it is small in absolute value
(based in part on the uniformly strict Cauchy-Schwarz property).
If we assume $k$ is large enough, we may even assume
$$
e^{\epsilon m}
\exp\left(\left[ -\Lambda_{k,n-m}\circ \rho(\kappa)\right]\Bigg|_{k/(n-m)}^{(k+1)/n}\right)\, 
\geq\, e^{\epsilon/2}\, ,
$$
since all $m$ are at least $1$.
Then, by (\ref{eq:FiniteDifference}), we have 
$$
\frac{Q_{k+1,n}}{\Theta_{k+1,n}}\, e^{\epsilon n}\,
\geq\, e^{\epsilon/2} \delta_k(\epsilon)\, \frac{\sum_{m=1}^{n-k} Q_{1,m} \rho\left(\frac{k+1}{n}\right)^m}
{F\left(\rho\left(\frac{k+1}{n}\right)\right)}\, .
$$
So as long as we can prove that
$$
\sum_{m=1}^{n-k} Q_{1,m}  \rho\left(\frac{k+1}{n}\right)^m\, \geq\, e^{-\epsilon/2} F\left(\rho\left(\frac{k+1}{n}\right)\right)\, ,
$$
we will have established the induction step.
If the series included all terms, we would have equality without the need for $e^{-\epsilon/2}$:
$$
\sum_{m=1}^{\infty} Q_{1,m}  \rho\left(\frac{k+1}{n}\right)^m\, =\, F\left(\rho\left(\frac{k+1}{n}\right)\right)\, .
$$
But our basic argument for proving that the truncated series loses only a small relative error is Markov's inequality,
Lemma \ref{lem:Markov}.
This does not work when $n-k$ is ``small.''
But note that for $r$ small we have 
$$
F(r)\, \sim\, Q_{1,1} r\, ,\ \text{ as $r \to 0^+$.}
$$
Therefore,
$$
\lim_{\kappa \to 1} \mathcal{G}(\kappa)\, 
=\, \lim_{\kappa \to 1} \Big(\kappa \ln(F(\rho(\kappa))) - \ln(\rho(\kappa))\Big)\, =\, \ln(Q_{1,1})\, .
$$
This means we only need the first term in the sum at the exact boundary point.
Since $Q_{n,k+1}$ requires $n\geq k+1$ so that $n-k\geq 1$, we do have that one term, even in the truncated sum.
By careful consideration of this limit, we should be able to prove the inequality directly without using Markov's inequality.
For example, $r F'(r)/F(r) = 1 + r (Q_{1,2}/Q_{1,1}) + O(r^2)$ as $r\to 0^+$. 
This shows
$\rho(\kappa) \sim (1-\kappa)Q_{1,1}/Q_{1,2}$ as $\kappa \to 1^-$.

Choosing $\eta>0$ sufficiently small, we may be able to prove directly that for $n/(k+1) < 1+\eta$
the necessary induction step is true even though the sum is truncated at $m=n-k$.
Then for $k$ sufficiently large and $n= (1+x)(k+1)$, for $x>\eta$, the truncation occurs at $m_{\max}=1+x(k+1)$.
By Markov's inequality, we need $n/((k+1)m_{\max})$ to be small. But this is 
$$
\frac{n}{(k+1)m_{\max}}\, =\, \frac{(1+x)(k+1)}{(k+1)(1+x(k+1))}\, =\, \frac{1+x}{x(k+1)}\, =\, \frac{1}{k+1} + \frac{1}{x(k+1)}\,
\leq\, \frac{1}{k+1} + \frac{1}{\eta (k+1)}\, \leq\, \frac{2}{\eta(k+1)}\, .
$$
So as long as $k$ is chosen sufficiently large that  $2/\eta(k+1) < \epsilon/2$, the effect of the truncation should be negligible
for the purpose of establishing the induction argument.

\begin{remark}
We refrain from carrying out all details of the proof because the steps are becoming more complicated.
We hope that either some other researchers can finish the proof (possibly with the benefit of collective intelligence [also known as AI]
to carry out the tedious steps, for example).
Or else, we plan to come back to try to complete the proof before too long has passed, less than a year.
\end{remark}

\section{Proof simplification: the condensation phase}

If the condensation phase is not empty, then the argument becomes a bit easier, not more difficult.
Consider the desideratum
$$
e^{\epsilon m}
\exp\left(\left[ -\Lambda_{k,n-m}\circ \rho(\kappa)\right]\Bigg|_{k/(n-m)}^{(k+1)/n}\right)\, 
\geq\, 1\, .
$$
In the condensation phase $\rho(\kappa)$ has become frozen at $R$.
So, depending on $n$, $k$ and $m$, the $\rho$ at the two values are equal.
Thus instead of Taylor expanding, the difference is just zero, directly.
We also note that the derivative is continuous at the critical point.
It is just the second derivative that drops to $0$ in the condensation phase.
Therefore, since the Taylor expansion that we have considered is to 2nd order,
we do have that the 2nd derivative is $0$ in the condensation phase.
So the effect of the condensation phase is to make terms that we are trying to bound
to become even smaller (in fact $0$).

\section{Examples and known results}

The first four examples are the first four examples that Comtet gives in his monograph.
\begin{itemize}
\item[(1)] If you let $w_n=1$ for all $n \in \N$, then $B_{n,k} = S_{n,k}$ is the Stirling number of the second kind.
$$
\begin{tikzpicture}
\draw (-0.25,-0.25) -- (5.5,-0.25);
\draw (0.5,0.25) -- (0.5,-2.75);
\draw (0,0) node[] {$n$};
\draw (1,0) node[] {$S_{n,1}$};
\draw (2,0) node[] {$S_{n,2}$};
\draw (3,0) node[] {$S_{n,3}$};
\draw (4,0) node[] {$S_{n,4}$};
\draw (5,0) node[] {$S_{n,5}$};
\draw (0,-0.5) node[] {$1$};
\draw (1,-0.5) node[] {$1$};
\draw (0,-1) node[] {$2$};
\draw (1,-1) node[] {$1$};
\draw (2,-1) node[] {$1$};
\draw (0,-1.5) node[] {$3$};
\draw (1,-1.5) node[] {$1$};
\draw (2,-1.5) node[] {$3$};
\draw (3,-1.5) node[] {$1$};
\draw (0,-2) node[] {$4$};
\draw (1,-2) node[] {$1$};
\draw (2,-2) node[] {$7$};
\draw (3,-2) node[] {$6$};
\draw (4,-2) node[] {$1$};
\draw (0,-2.5) node[] {$5$};
\draw (1,-2.5) node[] {$1$};
\draw (2,-2.5) node[] {$15$};
\draw (3,-2.5) node[] {$25$};
\draw (4,-2.5) node[] {$10$};
\draw (5,-2.5) node[] {$1$};
\end{tikzpicture}
$$
More information is available at 
$$
\text{\url{https://oeis.org/A008277}}
$$
\item[(2)] If you let $w_n=n!$ for all $n \in \N$, then $B_{n,k}$ equals $\binom{n-1}{k-1}\, \frac{n!}{k!}$
which Comtet called the Lah numbers.
\item[(3)] If you let $w_n=(n-1)!$ for all $n \in \N$, then $B_{n,k}=|s_{n,k}|$ the unsigned Stirling numbers
of the first kind.
$$
\begin{tikzpicture}
\draw (-0.25,-0.25) -- (5.5,-0.25);
\draw (0.5,0.25) -- (0.5,-2.75);
\draw (0,0) node[] {$n$};
\draw (1,0) node[] {$|s_{n,1}|$};
\draw (2,0) node[] {$|s_{n,2}|$};
\draw (3,0) node[] {$|s_{n,3}|$};
\draw (4,0) node[] {$|s_{n,4}|$};
\draw (5,0) node[] {$|s_{n,5}|$};
\draw (0,-0.5) node[] {$1$};
\draw (1,-0.5) node[] {$1$};
\draw (0,-1) node[] {$2$};
\draw (1,-1) node[] {$1$};
\draw (2,-1) node[] {$1$};
\draw (0,-1.5) node[] {$3$};
\draw (1,-1.5) node[] {$2$};
\draw (2,-1.5) node[] {$3$};
\draw (3,-1.5) node[] {$1$};
\draw (0,-2) node[] {$4$};
\draw (1,-2) node[] {$6$};
\draw (2,-2) node[] {$11$};
\draw (3,-2) node[] {$6$};
\draw (4,-2) node[] {$1$};
\draw (0,-2.5) node[] {$5$};
\draw (1,-2.5) node[] {$24$};
\draw (2,-2.5) node[] {$50$};
\draw (3,-2.5) node[] {$35$};
\draw (4,-2.5) node[] {$10$};
\draw (5,-2.5) node[] {$1$};
\end{tikzpicture}
$$
More information is available at 
$$
\text{\url{https://oeis.org/A008275}}
$$
\item[(4)] If we let $w_n = n$ for all $n \in \N$ then $B_{n,k}$ equals $\binom{n}{k} k^{k-1}$, which Comtet
calls the idempotent numbers.
More information is available at 
$$
\text{\url{https://oeis.org/A059297}}
$$
\end{itemize}

Another example that Comtet gives later in the book is this one:
\begin{itemize}
\item[(5)] For each $n \in \N$, let $\sigma(n) = \sum_{k | n} k$ the sum of divisors. 
Then let $w_n  = (n-1)! \sigma(n)$ for each $n$. This is example 10 on page 159 of Comtet's monograph.
Then $B_{n,k}$ are the d'Arcais numbers $A_{n,k}$.
$$
\begin{tikzpicture}
\draw (-0.25,-0.25) -- (5.5,-0.25);
\draw (0.5,0.25) -- (0.5,-2.75);
\draw (0,0) node[] {$n$};
\draw (1,0) node[] {$A_{n,1}$};
\draw (2,0) node[] {$A_{n,2}$};
\draw (3,0) node[] {$A_{n,3}$};
\draw (4,0) node[] {$A_{n,4}$};
\draw (5,0) node[] {$A_{n,5}$};
\draw (0,-0.5) node[] {$1$};
\draw (1,-0.5) node[] {$1$};
\draw (0,-1) node[] {$2$};
\draw (1,-1) node[] {$3$};
\draw (2,-1) node[] {$1$};
\draw (0,-1.5) node[] {$3$};
\draw (1,-1.5) node[] {$8$};
\draw (2,-1.5) node[] {$9$};
\draw (3,-1.5) node[] {$1$};
\draw (0,-2) node[] {$4$};
\draw (1,-2) node[] {$42$};
\draw (2,-2) node[] {$59$};
\draw (3,-2) node[] {$18$};
\draw (4,-2) node[] {$1$};
\draw (0,-2.5) node[] {$5$};
\draw (1,-2.5) node[] {$144$};
\draw (2,-2.5) node[] {$450$};
\draw (3,-2.5) node[] {$215$};
\draw (4,-2.5) node[] {$30$};
\draw (5,-2.5) node[] {$1$};
\end{tikzpicture}
$$
More information is available at 
$$
\text{\url{https://oeis.org/A008298}}
$$
\end{itemize}
It is the d'Arcais numbers which come closest to our main interest.
Abdelmalek Abdesselam considered commuting $\ell$-tuples of permutations $(\pi_1,\dots,\pi_{\ell})$
with $\pi_1,\dots,\pi_{\ell} \in S_n$.
Then considering the group generated by these elements $\langle \pi_1,\dots,\pi_{\ell} \rangle$,
one can consider the number of orbits of this subgroup acting on $[n] = \{1,\dots,n\}$.
If one requires the orbit number to be $k$, then the number of such $\ell$-tuples 
are written as $A(\ell,n,k)$.
In \cite{Abdesselam1}, Abdesselam, Brunialti, Doan and Velie showed that
$$
A(\ell,n,k)\, =\, \frac{n!}{k!}\, \sum_{\substack{(\nu(1),\dots,\nu(k)) \in \N^k\\ \nu(1)+\dots+\nu(k)=n}} \prod_{j=1}^{k}
\frac{\mathfrak{B}(\ell,\nu(j))}{\nu(j)}\, ,
$$
for $\mathfrak{B}(\ell,n)$ being the multiplicative function (in the number theory sense that 
$\mathfrak{B}(\ell,ab) = \mathfrak{B}(\ell,a) \mathfrak{B}(\ell,b)$ when $\operatorname{gcd}(a,b)=1$) such that
$$
\mathfrak{B}(\ell,q^m)\, =\, \frac{(q^{\ell}-1)(q^{\ell+1}-1)\cdots (q^{\ell+m-1}-1)}{(q-1)(q^2-1)\cdots (q^m-1)}\, ,
$$
when $m\geq 0$ and $q$ is prime.
This result had previously been proved by Bryan and Fulman using generating functions (and the wreath product on $S_n$) 
in a celebrated result \cite{BryanFulman}.
But Abdesselam, Brunialti, Doan and Velie gave a direct bijective proof using discrete tori with twists.
From the Bryan and Fulman article,
$$
\sum_{n=1}^{\infty} \frac{\mathfrak{B}(\ell,n)}{n}\, z^n\, =\, 
-\sum_{\delta_1,\dots,\delta_{\ell-1} \in \N} \delta_1^{\ell-2} \delta_2^{\ell-3} \cdots \delta_{\ell-2}\, \ln\left(1-z^{\delta_1 \cdots \delta_{\ell-1}}\right)\, .
$$
In particular, for $\ell=2$ this is the example of the d'Arcais numbers.

We note that for each $\ell \in \{2,3,\dots\}$ these are Bell polynomials and the $F(z)$ is as above, for $w_n = (n-1)! B(\ell,n)$.
It is easy to see that
$$
\sum_{n=1}^{\infty} \frac{\mathfrak{B}(\ell,n)}{n}\, r^n\, \sim\, \frac{(\ell-2)! \zeta(2)\cdots \zeta(\ell)}{(1-r)^{\ell-1}}\, ,\ \text{ as $r \to 1^-$.}
$$
This means that the generating function has radius of convergence $R=1$ and does satisfy the uniformly strict Cauchy-Schwarz inequality.
Therefore, our conjecture is that the large deviation rate function is given by Cram\'er's formula,
and the proof can be carried out using Erd\H{o}s's elementary method.

We do state that for these particular examples, a central limit theorem was already proved in \cite{AbdesselamStarr}
but that was not as strong as the large deviation principle is.
In particular, if one can obtain the large deviation principle, including the Bahadur-Rao term, then 
one may be able to re-derive a result of Abdesselam \cite{Abdesselam2}, where he obtained precise asymptotics
for the numbers $A(\ell,n,k)$ using the Mellin transform and a method pioneered by Bringmann, Franke and Heim in \cite{BFH}.
Indeed, for $\ell=2$, one of the authors did do this (using the circle method instead of Erd\H{o}s's elementary method).
But for that case, the modular symmetry of the generating function was used, since the generating function in that case is 
essentially Dedekind's $\eta$-function.
If one wanted to try to reproduce their results for all $\ell\geq 3$ then one would need to show that the circle method
applies to those cases (which is probable) and also find a generalization of modularity for those generating functions,
which we are not at all sure about ourselves (since we are not number theorists).

\subsection{One counterexample}

Essentially, the uniformly strict Cauchy-Schwarz property applies to all of the examples except the Stirling numbers of the second kind.
We mention that the Stirling numbers of the second kind $S(n,k)$ are the most famous and useful example of Bell polynomials.
But since $w_n\equiv 1$, they are also the example that most clearly fails the ``uniformly strict Cauchy-Schwarz'' property.

\subsection{Previous results for the Stirling numbers}

The textbook by R.~B.~Dingle \cite{Dingle} uses contour integrals to motivate results for asymptotics.
Two examples are the Stirling numbers of the first and second kind.
But the rigorous proofs are due to Moser and Wyman \cite{MoserWyman1,MoserWyman2} and Temme \cite{Temme}.

Firstly, there is the elementary formula (Cauchy product)
$$
F(z)^k\, =\, \sum_{n=k}^{\infty} B_{n,k}\, \frac{k!}{n!}\, z^n\, .
$$
Using this and Cauchy's integral formula, one may write
$$
B_{n,k}\, \frac{k!}{n!}\, =\, \int_{\mathcal{C}(0;r)} \frac{F(z)^k}{z^n}\, \cdot \frac{dz}{2\pi i z}\, ,
$$
for any $r \in (0,R)$. Then one can try to apply the Hayman saddle point method,
as described by Flajolet and Sedgewick \cite{FlajoletSedgewick}.
We note that the choice of $r$ is to minimize $k \ln(F(r;\boldsymbol{w})) - n \ln(r)$.

\begin{itemize}
\item[(A)] For example (1) above, we have
$$
F(z)\, =\, \sum_{n=1}^{\infty} \frac{1}{n!}\, z^n\, =\, e^z-1\, .
$$
The radius of convergence is $R=\infty$.
For any number $t \in (0,\infty)$, let $\kappa \in (0,1)$ be defined such that
$$
\kappa\, =\, \frac{1-e^{-t}}{t}\, .
$$
Then if we take a sequence $k_n$ such that $\lim_{n \to \infty} k_n/n=\kappa$, then
the Stirling numbers of the second kind satisfy
$$
\lim_{n \to \infty} \frac{1}{n}\, \ln\left(S(n,k_n)\, \frac{k_n!}{n!}\right)\, =\, \ln(t) - \kappa \ln(e^{t}-1)\, .
$$
This may be read from Example 4 on Page 199 of R.~B.~Dingle's monograph \cite{Dingle}.

We note that while our ``uniformly strict Cauchy-Schwarz'' property is not satisfied for this model, by other methods, it is known
that the large deviation rate function is given by the Cram\'er formula.
We view it as an interesting challenge to try to adapt the elementary method to handle this example, possibly
by keeping better track of the large deviation rate function approximation $\Theta_{k,n}$ for $n \to \infty$.
\item[(B)] For example (3) above, we have
$$
F(z)\, =\, \sum_{n=1}^{\infty} \frac{1}{n}\, z^n\, =\, -\ln(1-z)\, .
$$
The radius of convergence is $R=1$.
Since
$$
\mathcal{M}(z)\, =\, \frac{z F'(z)}{F(z)}\, =\, -\frac{z}{(1-z)\ln(1-z)}\, ,
$$
and
$$
\mathcal{V}(z)\, =\, z\, \frac{d}{dz}\left(\frac{z F'(z)}{F(z)}\right)\,
=\, \left(-\frac{\ln(1-z)}{z}-1\right) \mathcal{M}(z)^2\, ,
$$
that means that this example does satisfy the ``uniformly strict Cauchy-Schwarz'' property.

For any number $t \in (0,\infty)$, let $\kappa \in (0,1)$ be defined such that
$$
\kappa\, =\, \frac{t}{e^t-1}\, .
$$
Then if we take a sequence $k_n$ such that $\lim_{n \to \infty} k_n/n=\kappa$, 
the unsigned Stirling numbers of the first kind satisfy
$$
\lim_{n \to \infty} \frac{1}{n}\, \ln\left(|s(n,k_n)|\, \frac{k_n!}{n!}\right)\, =\, \kappa \ln(t) - \ln(1-e^{-t})\, .
$$
This may be read from Example 5 on pp.~199--200 of Dingle's monograph.
\end{itemize}

For these two examples, there are also elementary techniques that bypass contour integration.
There are bounds due to Arratia and DeSalvo using the Chen-Stein method \cite{ArratiaDeSalvo}.
There is also recent work on this topic using elementary techniques analogous to the Erd\"os method by 
Hwang, Li and Zacharovas
\cite{HwangLiZacharovas}.
We are actively reading their article now, to see how to bypass our ``uniformly strict Cauchy-Schwarz'' property.

\section{Relevance of log concavity}

Because $\mathcal{G}$ is concave, we see that whenever Cram\'er's formula holds, we have that
$\ln(Q_{k,n})/n$ is asymptotically concave in $k/n$. Heim and Neuhauser had conjectured that log-concavity holds
for $A(\ell,n,k)$ for $\ell=2$, in \cite{HeimNeuhauser}. In \cite{StarrSmall} and more recently in the elegant explicit example from 
Charlton, Heim and Stumpenhusen \cite{CHS}, small counterexamples were found.
But the asymptotic log concavity is true, (by the Bahadur-Rao large deviation principle in \cite{StarrLDP}).
The conjecture, if it were proved true, would show generally why asymptotic log-concavity should hold.
It would also imply asymptotic log-concavity for $\ell\geq 3$, which is already known to be true due to Abdesselam's 
{\em tour-de-force} calculation \cite{Abdesselam2}. But the proof would be more elementary.
In \cite{Abdesselam0} Abdesselam has generalized the original conjecture of Heim and Neuhauser for $\ell\geq 3$.
As of now, we do not know if any research has been done on whether there may exist counterexamples for small values.
The validity of Abdesselam's conjecture seems like a fascinating open problem!

\section{One example for condensation}

The Online Encyclopedia of Integer Sequences is a great trove of information for particular combinatorial sequences.
Consider 
$$
w_n\, =\, n^{n-2}\, ,
$$
for $n \in \N$. This is well-known. It is the number of labeled trees on $n$ vertices. See for example
$$
\text{\url{https://en.wikipedia.org/wiki/Double_counting_(proof_technique)}}
$$
under the subsection, ``Counting trees.''
On OEIS, it is stated that $B_{n,k}$ is the number of forests with $n$ nodes and $k$ labeled trees.
That seems reasonable:
$$
\text{\url{https://oeis.org/A105599}}\, .
$$
We note that
$$
Q_{1,n}\, =\, \frac{n^{n-2}}{n!}\, \sim\, \frac{e^n}{n^{5/2}\, \sqrt{2\pi}}\, .
$$
So this has $R=e^{-1}$. Also using the Lambert $W$-function based at $0$,
$$
W_0(z)\, =\, \sum_{n=1}^{\infty} \frac{(-n)^{n-1}}{n!} z^n\, ,
$$
we see that $\int_0^z \xi^{-1} W_0(\xi)\, d\xi = -F(-z)$. So using the tree function $T(z) =-W_0(-z)$, we have
$F(z) =\int_0^z \xi^{-1} T(\xi)\, d\xi$.
Using $T(x) e^{-T(x)} = x$ we have $\xi^{-1} =e^t/r$ for $t=T(\xi)$.
And $d\xi = T'(\xi) e^{-T(\xi)} (1-T(\xi))\, d\xi$.
So $F(z) = \int_0^{T(z)} (1-t)\, dt = T(z) - \frac{1}{2}\, T(z)^2$.
Then using $W(-e^{-1})=-1$ this implies $\lim_{r \uparrow e^{-1}} F(r) = 1/2$.
See
$$
\text{\url{https://en.wikipedia.org/wiki/Lambert_W_function}}
$$
under the subsection, ``Special values.''
Moreover, $r F'(r) = T(r)$ so that $r F'(r)/F(r) = T(r)/(T(r)-\frac{1}{2}T(r)^2)=1/(1-\frac{1}{2}T(r))$.
Thus $\lim_{r\uparrow e^{-1}} r F'(r)/F(r) = 2$.
This means that there is a condensation phase in $(0,1/2)$.

\begin{remark}
We did not yet establish the conjecture. So our work does not establish this property for this example, rigorously, yet.
But it is extremely well-known in the literature.
See for example {\L}uczak and Pittel \cite{LuczakPittel}.
Or also see Aldous and Pitman \cite{AldousPitman}.
\end{remark}

\section*{Acknowledgment}

S.S.~ benefited from many conversations with Google Gemini, especially to find relevant literature.
In particular, the example for condensation was found in collaboration with Google Gemini: Google Gemini
alerted us to its existence in OEIS. We checked all of Google Gemini's calculuations using Wikipedia.
Following some suggestions, we refer to this as ``collective intelligence,'' because not only is the intelligence
of Google Gemini and similar machines ``artificial,'' but for our purposes it is more relevant that they have aggregated
the known research of all of our colleagues in the research field.
We owe a debt to Google Gemini, but even more we owe a debt to all other researchers whose work we use and we 
contribute towards.

\appendix

\section{The Cram\'er argument for the distribution function}

The main point of the Erd\H{o}s elementary argument is a kind of Tauberian theorem.
In probability theory, this would be called a ``local limit theorem,'' although a true local limit theorem
in this context would include a Bahadur-Rao correction, which we do not want to consider here.

In this section, we want to describe the standard Cram\'er argument, which leads to a weak limit theorem, instead.
For each $k$, let us denote the probability mass function on $n \in \{k,k+1,\dots\}$ as 
$$
\mu_{n}^{(k)}(r)\, =\, \frac{Q_{k,n} r^k}{F(r)^k}\, .
$$
(So $\boldsymbol{\mu}(r) = \boldsymbol{\mu}^{(1)}(r)$.)
Let the distribution function be denoted $\mathcal{D}^{(k)}(\cdot,r)$ where
$$
\mathcal{D}^{(k)}(x,r)\, =\, \sum_{n=k}^{\infty} \mathbf{1}_{(-\infty,x]}(k) \mu_{n}^{(k)}(r)\, .
$$
Then, by Chebyshev's inequality, for any $t>0$
$$
1-\mathcal{D}^{(k)}(x,r)\, \leq\, \frac{F(re^t)^k}{e^{xt} F(r)^k}\, ,
$$
while for $t>0$, we have
$$
\mathcal{D}^{(k)}(x,r)\, \leq\, \frac{F(re^{-t})^k}{e^{-xt} F(r)^k}\, .
$$

Therefore, we conclude the following.
\begin{lemma}
Fix $r \in (0,R)$.
\begin{itemize}
\item
Suppose $s \in (0,r)$. Then
$$
\frac{1}{k}\, \ln\left(\mathcal{D}^{(k)}(k\mathcal{M}(s),r)\right)\, \leq\, \ln\left(\frac{F(s)}{F(r)}\right) - \mathcal{M}(s)\, \ln\left(\frac{s}{r}\right)\, .
$$
\item
Suppose $s \in (r,R)$. Then
$$
\frac{1}{k}\, \ln\left(1-\mathcal{D}^{(k)}(k\mathcal{M}(s),r)\right)\, \leq\, \ln\left(\frac{F(s)}{F(r)}\right) - \mathcal{M}(s)\, \ln\left(\frac{s}{r}\right)\, .
$$
\item In case $\mathcal{M}(R)<\infty$, if $x>k \mathcal{M}(R) $ then
$$
\frac{1}{k}\, \ln\left(1-\mathcal{D}^{(k)}(x,r)\right)\, \leq\, \ln\left(\frac{F(R)}{F(r)}\right) - \frac{x}{k}\, \ln\left(\frac{s}{r}\right)\, .
$$
\end{itemize}
\end{lemma}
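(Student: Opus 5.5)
The plan is to derive all three bullets from the two exponential Chebyshev bounds displayed just before the lemma. For each bullet I will choose the tilting parameter $t$ so that the shifted radius $re^{\pm t}$ equals $s$, and then substitute $x=k\mathcal{M}(s)$.

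First I will re-derive the two Chebyshev bounds from the generating function identity $F(z)^k=\sum_{n\ge k}Q_{k,n}z^n$ for $|z|<R$. I read the definitions with their evident corrections: $\mu^{(k)}_n(r)=Q_{k,n}r^n/F(r)^k$, which is a probability mass function precisely because of that identity, and $\mathcal{D}^{(k)}(x,r)=\sum_{n}\mathbf{1}_{(-\infty,x]}(n)\mu^{(k)}_n(r)$.

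For $t>0$ with $re^t<R$, every $n>x$ satisfies $\mathbf{1}_{(x,\infty)}(n)\le e^{(n-x)t}$. Hence
$$
1-\mathcal{D}^{(k)}(x,r)\,\le\, e^{-xt}\sum_{n}e^{nt}\mu^{(k)}_n(r)\,=\,\frac{F(re^t)^k}{e^{xt}F(r)^k}\, .
$$
In the same way, $\mathbf{1}_{(-\infty,x]}(n)\le e^{(x-n)t}$ gives
$$
\mathcal{D}^{(k)}(x,r)\,\le\,\frac{e^{xt}F(re^{-t})^k}{F(r)^k}\, ,
$$
which is the intended reading of the second displayed bound.

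For the first bullet, I take $s\in(0,r)$ and set $t=\ln(r/s)>0$, so that $re^{-t}=s$. Taking $x=k\mathcal{M}(s)$ and applying $\frac1k\ln$, the right-hand side becomes
$$
\ln\big(F(s)/F(r)\big)+\mathcal{M}(s)\ln(r/s)\, ,
$$
which is the claimed expression. For the second bullet, I take $s\in(r,R)$ and $t=\ln(s/r)>0$, so that $re^t=s<R$ is admissible. The same substitution $x=k\mathcal{M}(s)$ gives $\ln(F(s)/F(r))-\mathcal{M}(s)\ln(s/r)$.

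For the third bullet, the hypothesis $\mathcal{M}(R)<\infty$ forces $F(R)<\infty$, by the convention that $\mathcal{M}(R)=\infty$ whenever $F(R)=\infty$. The variable $s$ is not quantified in the statement, so I interpret it as an arbitrary $s\in(r,R]$. For $s<R$, I apply the upper-tail bound with $t=\ln(s/r)$ and use the monotonicity of $F$ on $(0,R)$, which gives $F(s)\le F(R)$. This yields
$$
\tfrac1k\ln\big(1-\mathcal{D}^{(k)}(x,r)\big)\,\le\,\ln\big(F(R)/F(r)\big)-\tfrac{x}{k}\ln(s/r)\, .
$$
Letting $s\uparrow R$ extends the bound to $s=R$.

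The condition $x>k\mathcal{M}(R)$ is not needed for the validity of this bound. I would remark that it is exactly the condition under which the derivative in $\ln s$ of the exponent, namely $\mathcal{M}(s)-x/k$, stays negative on $(r,R)$. So the optimal choice is $s=R$, the condensation-type boundary case.

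I expect no real obstacle here. The only delicate points are bookkeeping: reconciling the apparent typos in the definitions ($r^k$ versus $r^n$, and the indicator evaluated at $k$ rather than at $n$), keeping $re^{t}$ inside the disc of convergence, and justifying the passage $s\to R$ in the third bullet via $F(R)<\infty$.
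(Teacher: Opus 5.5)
Your proposal is correct and follows the paper's route. The paper also just applies the two exponential Chebyshev bounds with $re^{\mp t}=s$ and $x=k\mathcal{M}(s)$, and your corrected readings of $\mu^{(k)}_n(r)$ and $\mathcal{D}^{(k)}$ are the intended ones. For the third bullet, which the paper does not argue separately and which has an unquantified $s$ (evidently meant to be $R$), your argument via $F(s)\le F(R)$ and letting $s\uparrow R$ fills the gap validly. You are also right that the condition $x>k\mathcal{M}(R)$ only serves to make $s=R$ the optimal choice, not to make the bound true.
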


This is intuitively quite close to (\ref{ineq:Upper}).
But the difference is that now we are bounding the cumulative distribution function instead of the probability mass function.
This gives enough continuity to run Cram\'er's formula, as usual.
In other words, we can get reversed bounds.

It is easy to see that
$$
\sum_{n=k}^{\infty} n \mu^{(k)}_n(r)\, =\, r\, \frac{d}{dr}\, \left(\ln \left(F(r)^k\right)\right)\, =\, k \mathcal{M}(r)\, .
$$
And similarly,
$$
\sum_{n=k}^{\infty} \left(n - k \mathcal{M}(r)\right)^2 \mu^{(k)}_n(r)\,
=\, \left(r\, \frac{d}{dr}\right)^2\left(\ln \left(F(r)^k\right)\right)\, =\, k \mathcal{V}(r)\, .
$$
Therefore, by the Chebyshev inequality,
$$
\mathcal{D}^{(k)}\left(k\mathcal{M}(r)+k^{1/2} a\, ;\ r\right)
-\mathcal{D}^{(k)}\left(k\mathcal{M}(r)-k^{1/2} a\, ;\ r\right)\, \geq\, 1 - \frac{\mathcal{V}(r)}{a^2}\, .
$$
In particular, we have
$$
\mathcal{D}^{(k)}\left(k\mathcal{M}(r)+\sqrt{2k\mathcal{V}(r)}\, ;\ r\right)
-\mathcal{D}^{(k)}\left(k\mathcal{M}(r)-\sqrt{2k\mathcal{V}(r)}\, ;\ r\right)\, \geq\, \frac{1}{2}\, .
$$
So for any $s<r$, we have, by the exponential property of the measures,
$$
\mathcal{D}^{(k)}\left(k\mathcal{M}(r)+\sqrt{2k\mathcal{V}(r)}\, ;\, s\right)
-\mathcal{D}^{(k)}\left(k\mathcal{M}(r)-\sqrt{2k\mathcal{V}(r)}\, ;\, s\right)\, 
\geq\, \frac{1}{2}\, \left(\frac{s}{r}\right)^{k\mathcal{M}(r)+\sqrt{2k\mathcal{V}(r)}}\, \frac{F(r)^k}{F(s)^k}\, .
$$
In particular,
$$
1-\mathcal{D}^{(k)}\left(k\mathcal{M}(r)-\sqrt{2k\mathcal{V}(r)}\, ;\, s\right)\, 
\geq\, \frac{1}{2}\, \left(\frac{s}{r}\right)^{k\mathcal{M}(r)+\sqrt{2k\mathcal{V}(r)}}\, \frac{F(r)^k}{F(s)^k}\, .
$$
Now given $s \in (0,R)$, choose $r \in (s,R)$.
Let $q_k \in (r,R)$ be chosen such that 
$k\mathcal{M}(r)=k\mathcal{M}(q_k)-\sqrt{2k\mathcal{V}(q_k)}$ for each $k\in \N$.
Of course by continuity of $\mathcal{M}$ and $\mathcal{V}$ (in fact analyticity) we know $q_k \to r$.
Then we have
$$
1-\mathcal{D}^{(k)}(k\mathcal{M}(r)\, ;\, s)\, \geq\, 
\frac{1}{2}\, \left(\frac{s}{q_k}\right)^{k\mathcal{M}(q_k)+\sqrt{2k\mathcal{V}(q_k)}}\, \frac{F(q_k)^k}{F(s)^k}\, .
$$
Taking the logarithm and taking the limit we see
$$
\liminf_{k \to \infty} \frac{1}{k}\, \ln\left(1-\mathcal{D}^{(k)}(k\mathcal{M}(r)\, ;\, s)\right)
\geq\, \mathcal{M}(r)\, \ln\left(\frac{s}{r}\right) + \ln\left(\frac{F(r)}{F(s)}\right)\, .
$$
Note that we chose our variables for this half of the argument so that the functions of $r$ and $s$
have reversed.
The other bound works symmetrically.
But we cannot go beyond $R$. So we conclude the following.

\begin{theorem}
Fix $r \in (0,R)$.
\begin{itemize}
\item
Suppose $s \in (0,r)$. Then
$$
\frac{1}{k}\, \ln\left(\mathcal{D}^{(k)}(k\mathcal{M}(s),r)\right)\, =\, \ln\left(\frac{F(s)}{F(r)}\right) - \mathcal{M}(s)\, \ln\left(\frac{s}{r}\right)\, .
$$
\item
Suppose $s \in (r,R)$. Then
$$
\frac{1}{k}\, \ln\left(1-\mathcal{D}^{(k)}(k\mathcal{M}(s),r)\right)\, =\, \ln\left(\frac{F(s)}{F(r)}\right) - \mathcal{M}(s)\, \ln\left(\frac{s}{r}\right)\, .
$$
\end{itemize}
\end{theorem}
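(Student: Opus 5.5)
The statement should be read as a limit: for fixed $r\in(0,R)$ and $s$ on the indicated side of $r$, the quantity $\frac{1}{k}\ln(\cdots)$ converges, as $k\to\infty$, to the Legendre-type expression on the right. I will also use the exponential tilting identity
$$\mu^{(k)}_n(r)=\frac{Q_{k,n}r^n}{F(r)^k}\, ,\qquad\text{so that}\qquad \mu^{(k)}_n(r)=\mu^{(k)}_n(s')\Big(\frac{r}{s'}\Big)^n\frac{F(s')^k}{F(r)^k}$$
for any $s'\in(0,R)$. The plan has two halves, an upper bound and a matching lower bound, and I treat only the first bullet ($s<r$). The second bullet ($s>r$) is the mirror image and is exactly the computation already displayed before the theorem with $q_k\to r$.

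\emph{Upper bound.} This is the first bullet of the preceding lemma: apply Chebyshev's inequality with $e^{-t}=s/r$ at $x=k\mathcal{M}(s)$. Since the lemma holds for every $k$, it immediately gives
$$\limsup_{k\to\infty}\frac1k\ln\mathcal{D}^{(k)}(k\mathcal{M}(s),r)\le \ln\frac{F(s)}{F(r)}-\mathcal{M}(s)\ln\frac{s}{r}\, .$$
No optimization over $t$ is needed, because the choice $e^{-t}=s/r$ is already the optimal one: the derivative in $t$ vanishes exactly when $\mathcal{M}(re^{-t})=x/k$.

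\emph{Lower bound.} Fix $s'\in(0,s)$.
\begin{enumerate}
\item By the variance identity $\sum_n (n-k\mathcal{M}(s'))^2\mu^{(k)}_n(s')=k\mathcal{V}(s')$ and Chebyshev's inequality, the $\mu^{(k)}(s')$-mass of the window $W_k=[k\mathcal{M}(s')-\sqrt{2k\mathcal{V}(s')},\,k\mathcal{M}(s')+\sqrt{2k\mathcal{V}(s')}]$ is at least $1/2$.
\item Since $\mathcal{M}$ is strictly increasing, $\mathcal{M}(s')<\mathcal{M}(s)$. Hence $W_k\subset(-\infty,k\mathcal{M}(s)]$ once $k$ is large, because the $\sqrt{k}$ half-width is eventually smaller than $k(\mathcal{M}(s)-\mathcal{M}(s'))$.
\item Transfer to $r$ with the tilting identity. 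As $r>s'$, the factor $(r/s')^n$ is increasing in $n$, so on $W_k$ it is bounded below by its value at the left endpoint.
\end{enumerate}
Putting these together,
$$\mathcal{D}^{(k)}(k\mathcal{M}(s),r)\ge \tfrac12\Big(\frac{r}{s'}\Big)^{k\mathcal{M}(s')-\sqrt{2k\mathcal{V}(s')}}\frac{F(s')^k}{F(r)^k}\, .$$
Taking $\frac1k\ln$ and $k\to\infty$ gives
$$\liminf_{k\to\infty}\frac1k\ln\mathcal{D}^{(k)}(k\mathcal{M}(s),r)\ge \ln\frac{F(s')}{F(r)}-\mathcal{M}(s')\ln\frac{s'}{r}$$
for every $s'<s$. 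Finally let $s'\uparrow s$, using the continuity (indeed analyticity) of $F$ and $\mathcal{M}$ on $(0,R)$.

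\emph{Main obstacle.} The delicate point is the mismatch between the tilting parameter and the threshold $k\mathcal{M}(s)$. Tilting exactly at $s'=s$ leaves only about half the mass below the threshold, and the left endpoint of the window then costs a factor $(r/s)^{-\sqrt{2k\mathcal{V}(s)}}$. That factor is subexponential, so tilting at $s$ would also work, but it requires an honest bound on the mass of $[k\mathcal{M}(s)-\sqrt{2k\mathcal{V}(s)},\,k\mathcal{M}(s)]$, which Chebyshev alone does not give. The strict monotonicity $\mathcal{V}>0$ is what lets the $s'\uparrow s$ route avoid any central limit input. This is also why the theorem is restricted to $s,r\in(0,R)$: beyond $R$, in the condensation regime, there is no tilt available, and only the third bullet of the lemma (an upper bound) survives.
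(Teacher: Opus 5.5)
Your proposal is correct and follows the paper's route. The exponential Chebyshev bound of the preceding lemma gives the upper estimate, and a second-moment Chebyshev window of mass at least $\tfrac12$, transported by exponential tilting, gives the matching lower estimate; you, like the paper, read the theorem as a limit $k\to\infty$ and write out one tail while invoking symmetry for the other. The only difference is a technical convenience: the paper tilts at a $k$-dependent $q_k$ chosen so the window edge sits exactly on the threshold and then uses $q_k\to r$, whereas you tilt at a fixed $s'$ whose window eventually lies strictly on the correct side of the threshold and let $s'\uparrow s$ after $k\to\infty$, which spares you from justifying the existence and convergence of $q_k$.
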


\baselineskip=12pt
\bibliographystyle{plain}

\end{document}